\documentclass[12pt]{article}

\usepackage{amssymb,amsmath,amsthm}
\usepackage{graphicx, epstopdf, caption, subcaption}

      \theoremstyle{plain}
      \newtheorem{theorem}{Theorem}[section]
      \newtheorem{lemma}{Lemma}[section]

      \theoremstyle{definition}
      \newtheorem{definition}{Definition}[section]
      
      \theoremstyle{remark}
      \newtheorem{remark}{Remark}[section]

\def\st{\raisebox{-3pt}{\Huge $*$}}

\begin{document}

\title{Embeddings of $\st$-graphs into 2-surfaces}
\author{Tyler Friesen
\footnote{Ohio State University, 154 W 12th Ave., Columbus, Ohio 43210}
\footnote{\tt {friesen.15@osu.edu}}
\and Vassily Olegovich Manturov
\footnote{Peoples' Friendship University of Russia, Moscow 117198,
Ordjonikidze St., 3} \footnote{\tt {vomanturov@yandex.ru}}
\footnote{Partially supported by grants of the Russian Government
11.G34.31.0053, RF President NSh � 1410.2012.1, Ministry of
Education and Science of the Russian Federation 14.740.11.0794.}}

\maketitle

\begin{abstract}
This paper considers $\st$-graphs in which all vertices have degree 4 or 6, and studies the question of calculating the genus of orientable 2-surfaces into which such graphs may be embedded. A $\st$-graph is a graph endowed with a formal adjacency structure on the half-edges around each vertex, and an embedding of a $\st$-graph is an embedding under which the formal adjacency relation on half-edges corresponds to the adjacency relation induced by the embedding. $\st$-graphs are a natural generalization of four-valent framed graphs, which are four-valent graphs with an opposite half-edge structure. In \cite{manturov2}, the question of whether a four-valent framed graph admits a $\mathbb Z_2$-homologically trivial embedding into a given surface was shown to be equivalent to a problem on matrices. We show that a similar result holds for $\st$-graphs in which all vertices have degree 4 or 6. This gives an algorithm in quadratic time to determine whether a $\st$-graph admits an embedding into the plane.
\end{abstract}

{\bf Keywords:} Graph, $\st$-graph, surface, embedding, genus

{\bf AMS Subject Classification:} Primary 05C10; Secondary 57C15, 57C27

\section{Introduction}

\begin{definition}
A \emph{four-valent framed graph} is a regular four-valent graph which at each vertex is endowed with a pairing structure on the four outgoing half-edges. Half-edges which are paired together by this structure are (formally) \emph{opposite}.
\end{definition}

\begin{definition}
An \emph{angle} in a four-valent framed graph is a pair of non-opposite half-edges at a vertex.
\end{definition}

\begin{remark}
By an embedding of a four-valent framed graph $\Gamma$ into a surface $S$ we always mean an embedding of $\Gamma$ into $S$ such that the formal relation of being opposite coincides with the relation of being opposite induced by the embedding.
\end{remark}

\begin{definition}
A \emph{$\st$-graph} is a graph which at each vertex has a bijection from the outgoing half-edges to the vertices of a cycle graph. Half-edges which are mapped to adjacent vertices are (formally) \emph{adjacent}. Half-edges are said to be \emph{opposite} if they are mapped to vertices of maximal distance in the cycle graph.
\end{definition}

\begin{remark}
By an embedding of a $\st$-graph $\Gamma$ into a surface $S$ we always mean an embedding of  $\Gamma$ into $S$ such that the formal relation of being adjacent coincides with the relation of being adjacent induced by the embedding.
\end{remark}

\begin{definition}
An \emph{angle} in a $\st$-graph is a pair of adjacent half-edges at a vertex.
\end{definition}

\begin{remark}
For any 4-regular $\st$-graph $\Gamma$, we can view $\Gamma$ as a four-valent framed graph by calling two edges at a vertex \emph{opposite} if and only if they are not adjacent.
\end{remark}

\begin{definition}
A \emph{checkerboard embedding} of a framed four-valent graph or $\st$-graph $\Gamma$ into $S$ is an embedding such that the cells of $S \setminus \Gamma$ admit a 2-coloring under which any cells with a common edge have different colors. 
\end{definition}

\begin{remark}
Checkerboard embeddings are exactly those embeddings whose first $\mathbb Z_2$-homology class is zero.
\end{remark}

In \cite{manturov1} the first named author (V.O.M.) gave a solution to the question of whether four-valent framed graphs are planar. In \cite{manturov2}, he addressed the question of determining the genus of surfaces into which four-valent framed graphs can be embedded, in particular considering the special case of surfaces into which four-valent framed graphs may be checkerboard-embedded. In \cite{friesen}, the second named author (T.F.) gave a planarity criterion for $\st$-graphs.
The goal of this paper is to provide a method for determining whether a $\st$-graph $\Gamma$ has a checkerboard embedding into an orientable surface of genus $g$. In Theorem \ref{main} we show that this is equivalent to a problem on matrices.

The authors of this paper would like to thank Victor Anatolievich Vassiliev and Sergei Vladimirovich Chmutov for valuable discussions.

\section{Basic Notions}

\begin{definition}
An \emph{atom} \cite{fomenko} is a closed 2-surface $S$ into which a 4-valent graph $\Gamma$ (the \emph{skeleton} of the atom) is embedded in such a way that it divides $S$ into black and white cells so that cells sharing an edge have different colors.
\end{definition}

Note that the atom induces a framing on its skeleton. Opposite half-edges at each vertex of the skeleton determine four angles among which two pairs of opposite angles are specified. One of the pairs of opposite angles must be black, and the other white. Given a four-valent framed graph $\Gamma$, the atoms corresponding to $\Gamma$ are uniquely determined by a choice of one of the two possible colorings at each vertex.

\begin{definition}
A \emph{$\st$-atom} is a closed 2-surface $S$ into which a connected graph $\Gamma$ (the \emph{skeleton} of the $\st$-atom) is embedded in such a way that it divides $S$ into black and white cells so that cells sharing an edge have different colors.
\end{definition}

This embedding induces a $\st$-structure on the skeleton. The $\st$-structure at each vertex determines a set of $d$ angles among which we say that two angles are adjacent if they share a half-edge. Two adjacent angles never have the same color. Thus the angles around a vertex can be partitioned into two sets $A_1$ and $A_2$ in such a way that for any $\st$-atom corresponding to the $\st$-graph $\Gamma$, either all angles in $A_1$ are black and all angles in $A_2$ are white, or all angles in $A_1$ are white and all angles in $A_2$ are black. Thus given a connected $\st$-graph $\Gamma$, the $\st$-atoms corresponding to $\Gamma$ are uniquely determined by a choice of one of the two possible colorings at each vertex. Thus the main problem can be reformulated as follows:

Given a $\st$-graph $\Gamma$ in which all vertices have degree 4 or 6, choose a coloring for the angles around each vertex such that the genus of the resulting atom is minimal. If such a graph has $n$ vertices, there are $2^n$ corresponding $\st$-atoms.

\begin{definition}
A framed four-valent graph satisfies the \emph{source-sink condition} \cite{manturov2} if each edge of it can be endowed with an orientation in such a way that for each vertex, two opposite edges are emanating, and the remaining two are incoming.
\end{definition}

\begin{definition}
A  $\st$-graph satisfies the \emph{source-sink condition} if each edge of it can be endowed with an orientation in such a way that for any two adjacent edges at a vertex, one edge is emanating and the other is incoming.
\end{definition}

\begin{lemma}
A $\st$-graph satisfies the source-sink condition if and only if it is checkerboard-embeddable into an orientable surface.
\end{lemma}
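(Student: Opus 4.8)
The plan is to prove the two implications separately, in both cases translating between the combinatorial source-sink data and the topological data through a fixed ``black cell on the left'' convention.

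\emph{A checkerboard embedding into an orientable surface gives a source-sink structure.} Suppose $\Gamma$ is checkerboard-embedded in an orientable surface $S$ and fix an orientation of $S$. I would first note that in a checkerboard embedding no edge can border the same cell on both sides (that cell would be adjacent to itself, violating the $2$-coloring), so every edge separates a black cell from a white cell, and hence the corners around any vertex --- cyclically adjacent corners lie in cells sharing an edge --- are properly $2$-colored; in particular every vertex has even degree. Now orient each edge so that the black cell incident to it lies on its left (well defined, since this cell is the same along the whole edge). Reading the half-edges $h_1,\dots,h_{2k}$ at a vertex $v$ in the cyclic order induced by the orientation of $S$, one checks that the edge of $h_i$ emanates from $v$ exactly when the corner between $h_i$ and $h_{i+1}$ is black; since these corners alternate color around $v$, the half-edges alternate between emanating and incoming, so any two adjacent half-edges carry one emanating and one incoming edge. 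That is the source-sink condition.

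\emph{A source-sink structure gives a checkerboard embedding into an orientable surface.} Conversely, given a source-sink orientation, the half-edges alternate emanating/incoming around each vertex, so all degrees are again even. I would build the surface by hand: replace each edge by an oriented band (a square) whose two long sides are colored black and white, the black side placed on the left of the edge orientation; replace each vertex $v$ of degree $2k$ by a disk to which the incident bands are glued in a cyclic order compatible with the $\st$-structure; and cap the resulting boundary circles with disks. The band colors induce a coloring of the corners at each $v$, and because emanating and incoming edges alternate around $v$, at every corner the two adjacent bands show the same color, so this is a genuine proper $2$-coloring of the corners --- one of the two admissible $\st$-atom colorings. By construction the embedded graph carries the prescribed $\st$-structure and the embedding is checkerboard.

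The real content is showing that the surface produced in the second part is orientable, and this is the step I expect to be the main obstacle. I would give every band its standard orientation (fixed by the edge orientation) and verify that all bands attached at a vertex disk $D_v$ demand the \emph{same} orientation of $D_v$: travelling along $\partial D_v$, the constraint imposed by an emanating slot and that imposed by an incoming slot dovetail into one coherent cyclic orientation precisely because emanating and incoming half-edges alternate around $v$ --- this is where evenness of the degree and the source-sink hypothesis are used. Coherently orienting all bands and vertex disks, the orientation then extends over the capping disks, so $S$ is orientable. Finally, if $\Gamma$ has a vertex of odd degree then, by the parity remarks above, it is neither checkerboard-embeddable nor source-sink-orientable, so the equivalence holds (vacuously) in that case as well.
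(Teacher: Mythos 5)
Your proposal is correct, and in fact it supplies more than the paper does. For the direction ``checkerboard embedding $\Rightarrow$ source-sink,'' your ``black cell on the left'' rule is exactly the paper's ``orient the edges so that white cells are traversed clockwise,'' just phrased edge-by-edge instead of cell-by-cell; the two are the same argument. The difference is in the converse. The paper's second sentence, as printed, again hypothesizes a checkerboard embedding and merely describes how to orient the cells, so it never actually constructs an embedding from a source-sink orientation; the converse direction is effectively asserted rather than proved. You fill this in with the standard atom/ribbon-graph construction: bands for edges with a black and a white long side, vertex disks glued in the cyclic order prescribed by the $\st$-structure, and capping disks, with the key verifications that (i) at each corner the two incident band sides agree in color (so each boundary circle is monochromatic and the capping yields a genuine checkerboard coloring) and (ii) the band orientations induced by the edge orientations cohere around each vertex disk precisely because emanating and incoming half-edges alternate, which is where the source-sink hypothesis enters and where orientability is won. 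This is the argument the paper implicitly relies on (it is ``well-known'' in the four-valent case, per the paper's remark), and your write-up makes it explicit; the only small point left tacit in your text is the observation that monochromatic boundary circles are what licenses coloring the capping disks, but that follows immediately from your corner-color claim. Your closing remark disposing of odd-degree vertices is also correct, though the paper restricts attention to degrees $4$ and $6$ so it is not strictly needed.
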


\begin{remark}
This result is well-known for framed four-valent graphs; see for example \cite{manturov2}
\end{remark}

\begin{proof}
Suppose that a $\st$-graph $\Gamma$ is checkerboard-embedded into an orientable surface $S$. We can orient the edges in such a way that the edges around any white cell are oriented clockwise around that cell, showing that $\Gamma$ meets the source-sink condition. Conversely, if a $\st$-graph $\Gamma$ is checkerboard-embedded into a surface $S$, we can orient the cells of the embedding in such a way that in white cells, the edges around the cell are oriented clockwise, and in black cells, the edges around the cell are oriented counterclockwise.
\end{proof}

\begin{remark}
If $\Gamma$ is a connected $\st$-graph with each vertex of valency 4 or 6, and $\Gamma$ does not satisfy the source-sink condition, then there exists a double covering of $\Gamma$ which satisfies the source-sink condition. Thus every question concerning the embedding of a $\st$-graph with each vertex of valency 4 or 6 can be reduced to the case where the source-sink condition is satisfied.
\end{remark}

\begin{definition}
An \emph{Euler circuit} $C$ of a $\st$-graph $\Gamma$ is a surjective mapping $S^1 \to \Gamma$ which is one-to-one except at the vertices, and such that every vertex of degree $d$ has $\frac{d}{2}$ preimages.
\end{definition}

\begin{definition}
Given an Euler circuit $C$ of a $\st$-graph $\Gamma$, a 4-vertex $v \in \Gamma$ is \emph{rotating} if for every $a \in C^{-1}(\{v\})$, $C(a+\epsilon)$ and $C(a-\epsilon)$ are on adjacent half-edges around $v$.
\end{definition}

\begin{definition}
Given an Euler circuit $C$ of a $\st$-graph $\Gamma$, a 6-vertex $v \in \Gamma$ is \emph{rotating} if for every $a \in C^{-1}(\{v\})$, $C(a+\epsilon)$ and $C(a-\epsilon)$ are on adjacent half-edges around $v$.
\end{definition}

\begin{definition}
Given an Euler circuit $C$ of a $\st$-graph $\Gamma$, a 6-vertex $v \in \Gamma$ is \emph{splitting} if for some $a \in C^{-1}(\{v\})$, $C(a+\epsilon)$ and $C(a-\epsilon)$ are on opposite half-edges around $v$, and for the other two points $b, c \in C^{-1}(\{v\})$, $C(b+\epsilon)$ and $C(b-\epsilon)$ are on adjacent half-edges, and $C(c+\epsilon)$ and $C(c-\epsilon)$ are on adjacent half-edges.
\end{definition}

\begin{figure}
	\begin{subfigure}{0.3\textwidth}
	\centering
	\includegraphics[trim = 8cm 20cm 8cm 3cm, clip=true, totalheight=3cm]{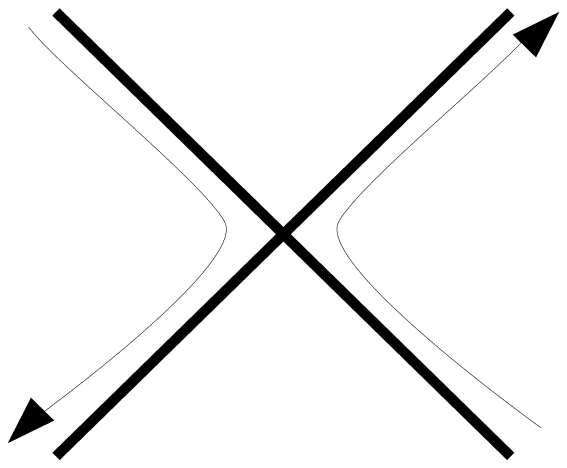}
	\caption{A rotating vertex of degree 4}
	\end{subfigure}
	\quad
	\begin{subfigure}{0.3\textwidth}
	\centering
	\includegraphics[trim = 5cm 17.5cm 3cm 5cm, clip=true, totalheight=3cm]{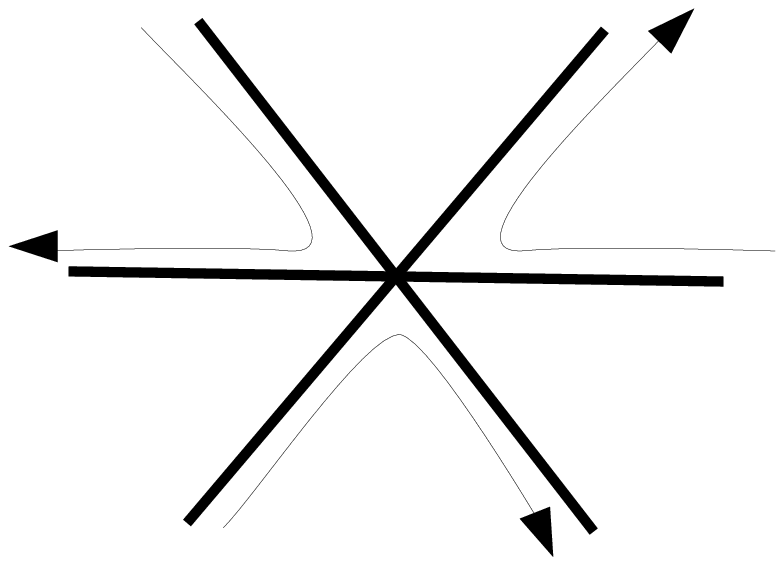}
	\caption{A rotating vertex of degree 6}
	\end{subfigure}
	\quad
	\begin{subfigure}{0.3\textwidth}
	\centering
	\includegraphics[trim = 7cm 17cm 0cm 3.8cm, clip=true, totalheight=3cm]{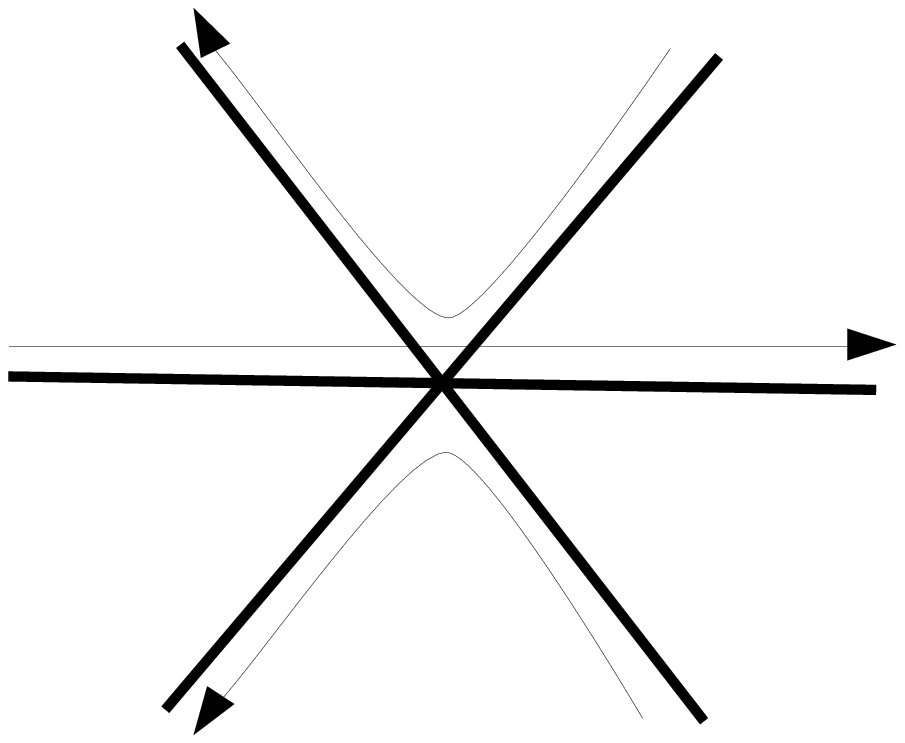}
	\caption{A splitting vertex of degree 6}
	\end{subfigure}
	\caption{}
\end{figure}

\begin{definition}
A \emph{rotating-splitting circuit} is an Euler circuit on a $\st$-graph with all vertices of degree 4 or 6, for which every 4-vertex is rotating and every 6-vertex is rotating or splitting.
\end{definition}

\begin{lemma}\label{rscycle-exists}
Let $\Gamma$ be a connected $\st$-graph in which all vertices have degree 4 or 6. Then if
$\Gamma$ satisfies the source-sink condition, it admits a rotating-splitting circuit.
\end{lemma}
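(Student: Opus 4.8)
The plan is to build the circuit in two stages: first produce a directed Euler circuit for the orientation supplied by the source-sink condition, then repair it locally at the offending $6$-vertices. Since $\Gamma$ satisfies the source-sink condition, fix an orientation of its edges realizing it. At a vertex of degree $d\in\{4,6\}$ the half-edges are cyclically ordered and adjacent half-edges carry opposite orientations, so the orientations alternate around the vertex; in particular every vertex has exactly $d/2$ incoming and $d/2$ outgoing half-edges. Hence oriented $\Gamma$ is a balanced connected digraph and carries a directed Euler circuit $C$ (a closed directed walk using each edge once, which fits the definition of Euler circuit above). I would then note that $C$ is automatically rotating at every $4$-vertex: each of its two passages through a $4$-vertex joins an incoming to an outgoing half-edge, and since the two incoming half-edges are opposite (as are the two outgoing ones), each passage runs between adjacent half-edges.

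It remains to control the $6$-vertices. At a $6$-vertex $v$ label the half-edges $1,\dots,6$ cyclically, with incoming half-edges $1,3,5$ and outgoing half-edges $2,4,6$; a passage through $v$ joins an odd to an even label, hence joins half-edges at odd cyclic distance, i.e.\ distance $1$ (adjacent) or $3$ (opposite). So each of the three passages of $C$ through $v$ is adjacent or opposite. If at most one is opposite, $v$ is rotating or splitting and nothing need be done; and if two of them are opposite, a short check (any two opposite forces the third, since e.g.\ $\tau(1)=4,\ \tau(3)=6$ forces $\tau(5)=2$, again opposite) shows all three are opposite. Thus the only obstruction is a $6$-vertex all three of whose passages are opposite, where the transition of $C$ is the unique bijection $\{1,3,5\}\to\{2,4,6\}$ sending each half-edge to its opposite; call such a vertex \emph{bad}.

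The repair step is the technical core. Fix a bad $6$-vertex $v$. Deleting from the domain circle of $C$ small neighbourhoods of the three points of $C^{-1}(\{v\})$ cuts $C$ into three arcs, each running in $\Gamma$ from $v$ to $v$ without revisiting $v$, leaving $v$ along an outgoing half-edge and returning along an incoming one. Any transition at $v$ --- a bijection from incoming to outgoing half-edges --- prescribes how to glue these three arcs end-to-start and so determines a permutation of the three arcs, and the glued object is a single Euler circuit exactly when this permutation is a $3$-cycle. There are precisely two $3$-cycles on three elements, so exactly two transitions at $v$ reconstitute a single Euler circuit: the one coming from $C$, and one other, $\tau_1$. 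Since $\tau_1$ differs from the bad transition and the bad transition is the only one that is neither rotating nor splitting, replacing the transition at $v$ by $\tau_1$ yields a single Euler circuit that is rotating or splitting at $v$, and the surgery leaves the transitions at all other vertices untouched. Performing this replacement at each bad $6$-vertex in turn produces a rotating-splitting circuit.

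The main obstacle is precisely this last step: one must know that an alternative transition at $v$ exists, that it still yields a connected Euler circuit, and that the local change does not disturb the other vertices. The clean route is the equivalence ``single Euler circuit'' $\Longleftrightarrow$ ``the transition induces a $3$-cycle on the three arcs at $v$'', which gives exactly two admissible transitions at each $6$-vertex, only one of which can be bad; everything else is the bookkeeping sketched above.
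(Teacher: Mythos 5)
Your proof is correct, but it takes a genuinely different route from the paper's. The paper starts by assigning an arbitrary rotating or splitting structure to every vertex, which decomposes the edge set into edge-disjoint cycles, and then merges cycles two or three at a time by local modifications at a shared vertex (a six-way case analysis on the degree, the current structure, and the number of cycles meeting there), iterating until a single circuit remains. You instead start from a directed Euler circuit supplied by the classical theorem on balanced connected digraphs, observe that the source-sink orientation forces every passage to run from an incoming to an outgoing half-edge (so $4$-vertices are automatically rotating and $6$-vertex passages are adjacent or opposite), and then repair the only obstruction --- the all-opposite $6$-vertices --- by a counting argument: transitions at $v$ correspond bijectively to permutations of the three arcs of $C$ based at $v$ (via the bijections between arcs and their terminal incoming, respectively initial outgoing, half-edges), a single circuit results exactly when that permutation is a $3$-cycle, and since only two of the six permutations are $3$-cycles and one of them is the bad transition, the other is necessarily rotating or splitting. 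Your parity check that two opposite passages force the third to be opposite is also needed and is right. What your approach buys is the elimination of the paper's figure-driven case analysis in favor of one clean counting step, at the cost of invoking the directed Euler circuit theorem; the paper's merging argument is more self-contained and does not need the source-sink orientation to produce the initial decomposition, only to control the local configurations during merging. Both arguments are effectively algorithmic.
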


\begin{proof}
Assign to each vertex any rotating or splitting structure. This gives a partition of the edges of $\Gamma$ into edgewise disjoint cycles. If there is only one such cycle, we are done. If there is more than one cycle, since the graph is connected, there must be a vertex $v$ shared by different cycles. If $v$ has degree 4, it must be rotating, and there must be two cycles meeting at $v$. In this case we can join the cycles at $v$ and still have $v$ be rotating, see Figure \ref{fig:join_cycles_x}. If $v$ has degree 6, is rotating, and is the meeting point of three cycles, we can join the cycles and still have $v$ be rotating, as in Figure \ref{fig:join_3_cycles_rotating}. If $v$ has degree 6, is rotating, and is the meeting point of two cycles, we can join the cycles by appropriately changing $v$ into a splitting vertex, as in Figure \ref{fig:join_2_cycles_rotating}. If $v$ has degree 6, is splitting, and is the meeting point of three cycles, we can join the cycles and still have $v$ be splitting, as in Figure \ref{fig:join_3_cycles_splitting}. If $v$ has degree 6, is splitting, and is the meeting point of 2 cycles, we have two possibilities: The cycle which contains only two of the half-edges around $v$ may contain two adjacent half-edges around $v$, or it may contain two opposite half-edges around $v$. In both cases, we can join the cycles by appropriately changing $v$ into a rotating vertex, see Figure \ref{fig:join_2_cycles_splitting_flat} and Figure \ref{fig:join_2_cycles_splitting_crossed}. By iterating this process, we arrive at a rotating-splitting circuit of $\Gamma$.
\end{proof}

\begin{figure}
\centering
\includegraphics[trim = 3cm 18cm 0cm 5cm, clip, totalheight=4cm]{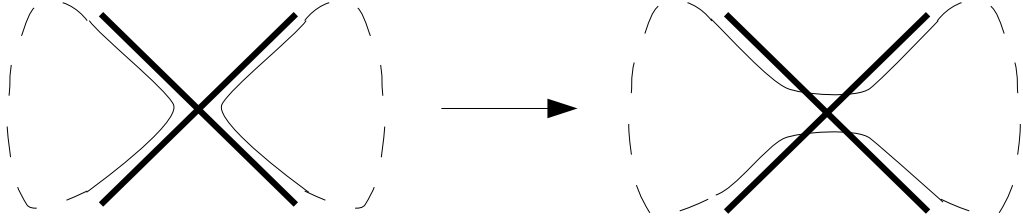}
\caption{Joining 2 cycles at a 4-vertex}
\label{fig:join_cycles_x}
\end{figure}

\begin{figure}
\centering
\includegraphics[trim = 0cm 21cm 0cm 0cm, clip, totalheight=4cm]{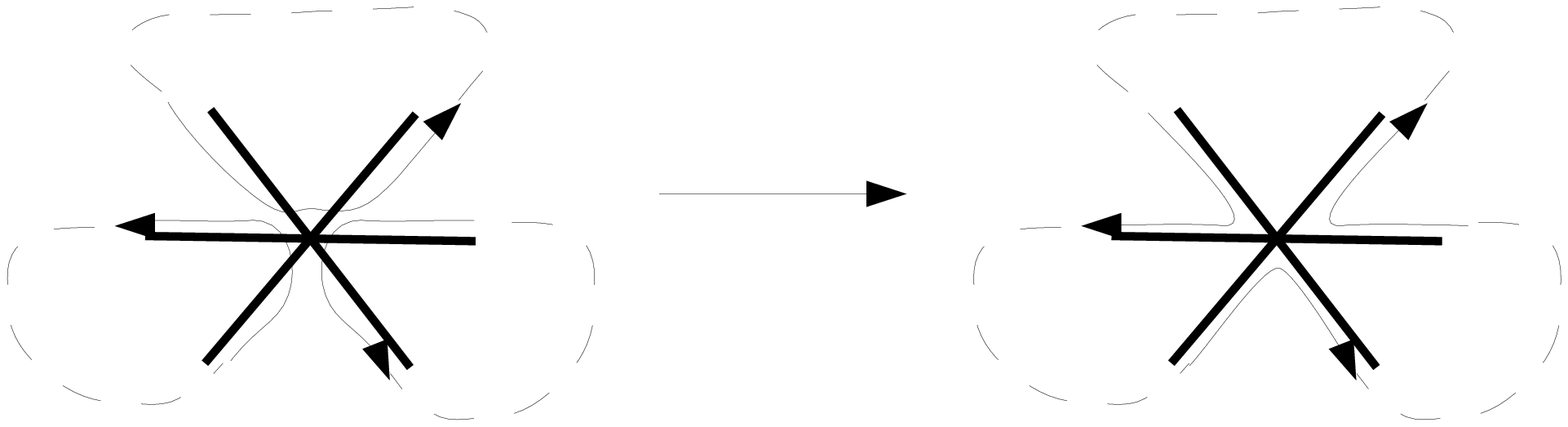}
\caption{Joining 3 cycles at a rotating 6-vertex}
\label{fig:join_3_cycles_rotating}
\end{figure}

\begin{figure}
\centering
\includegraphics[trim = 0cm 15cm 0cm 3cm, clip, totalheight=4cm]{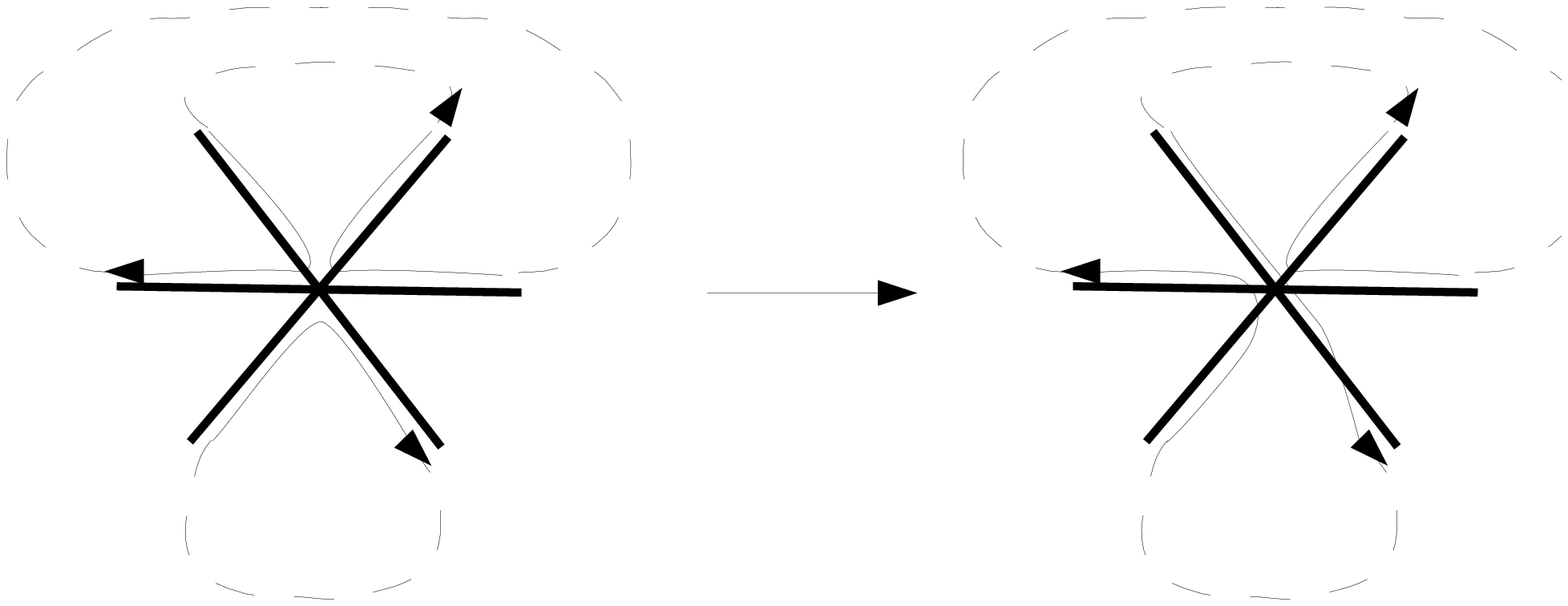}
\caption{Joining 2 cycles at a rotating 6-vertex}
\label{fig:join_2_cycles_rotating}
\end{figure}

\begin{figure}
\centering
\includegraphics[trim = 0cm 18cm 0cm 3cm, clip, totalheight=4cm]{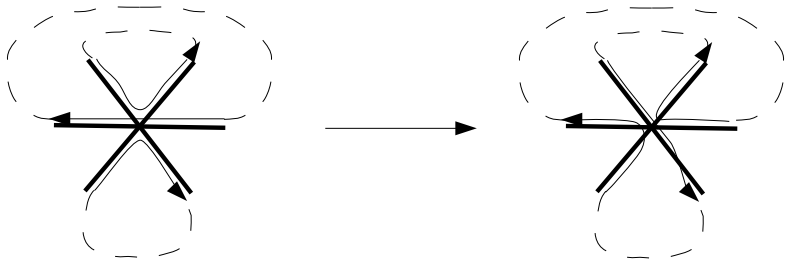}
\caption{Joining 3 cycles at a splitting 6-vertex}
\label{fig:join_3_cycles_splitting}
\end{figure}

\begin{figure}
\centering
\includegraphics[trim = 0cm 21cm 0cm 0cm, clip, totalheight=4cm]{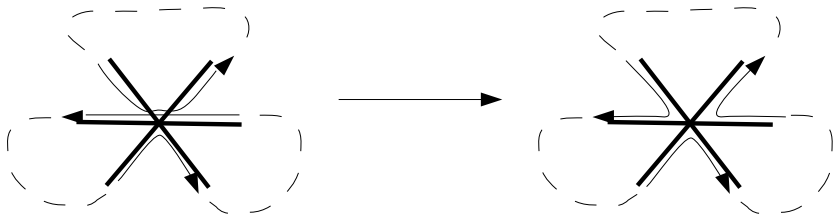}
\caption{Joining 2 cycles at a splitting 6-vertex (first case)}
\label{fig:join_2_cycles_splitting_flat}
\end{figure}

\begin{figure}
\centering
\includegraphics[trim = 0cm 15cm 0cm 5cm, clip, totalheight=4cm]{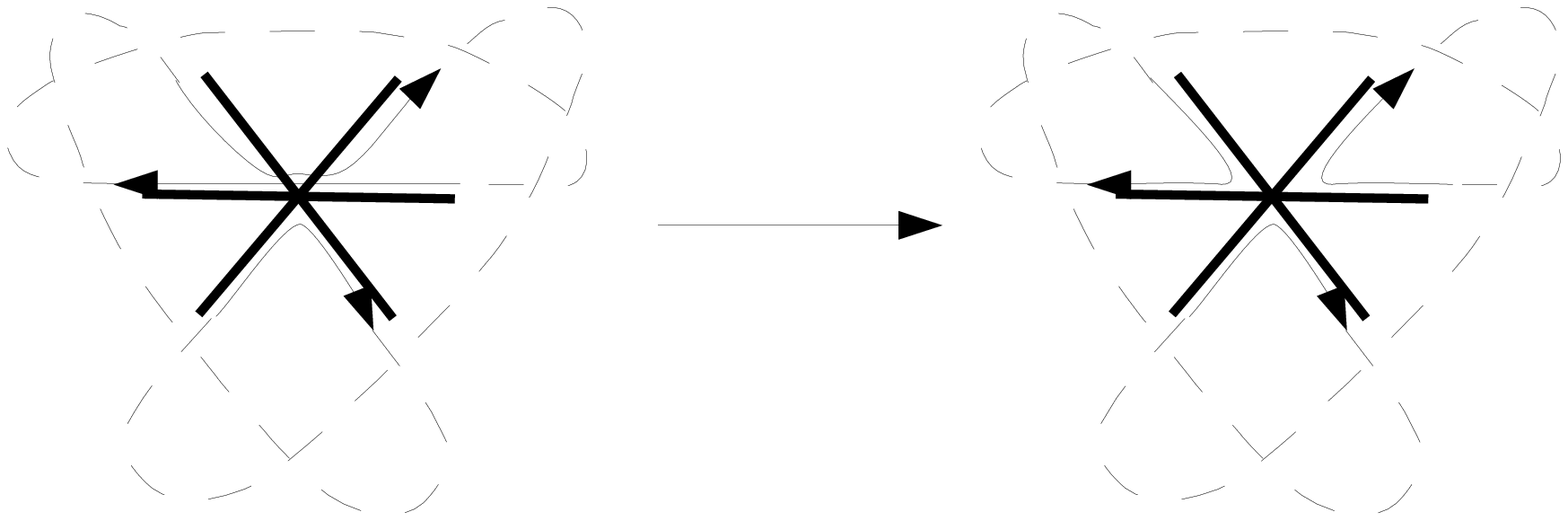}
\caption{Joining 2 cycles at a splitting 6-vertex (second case)}
\label{fig:join_2_cycles_splitting_crossed}
\end{figure}

For any $\st$-graph $\Gamma$ with all vertices of degree 4 or 6, any rotating-splitting circuit $C$ of $\Gamma$, and any rotating 6-vertex $v$ in $\Gamma$, observe that the points $C^{-1}(\{v\})$ divide $S^1$ into arcs, and the images of these arcs under $C$ are cycles in $\Gamma$, each of which contains two half-edges around $v$.

\begin{definition}
If each of these arcs contains two adjacent half-edges around $v$, $v$ is \emph{flat}. If each of these arcs contains two opposite half-edges around $v$, $v$ is \emph{crossed}.
\end{definition}

\begin{remark}
In a $\st$-graph with all vertices of degree 4 or 6 and which satisfies the source-sink condition, every rotating 6-vertex is either flat or crossed.
\end{remark}

\begin{figure}
	\begin{subfigure}{0.45\textwidth}
	\centering
	\includegraphics[trim = 2.5cm 16cm 0cm 3cm, clip=true, totalheight=4cm]{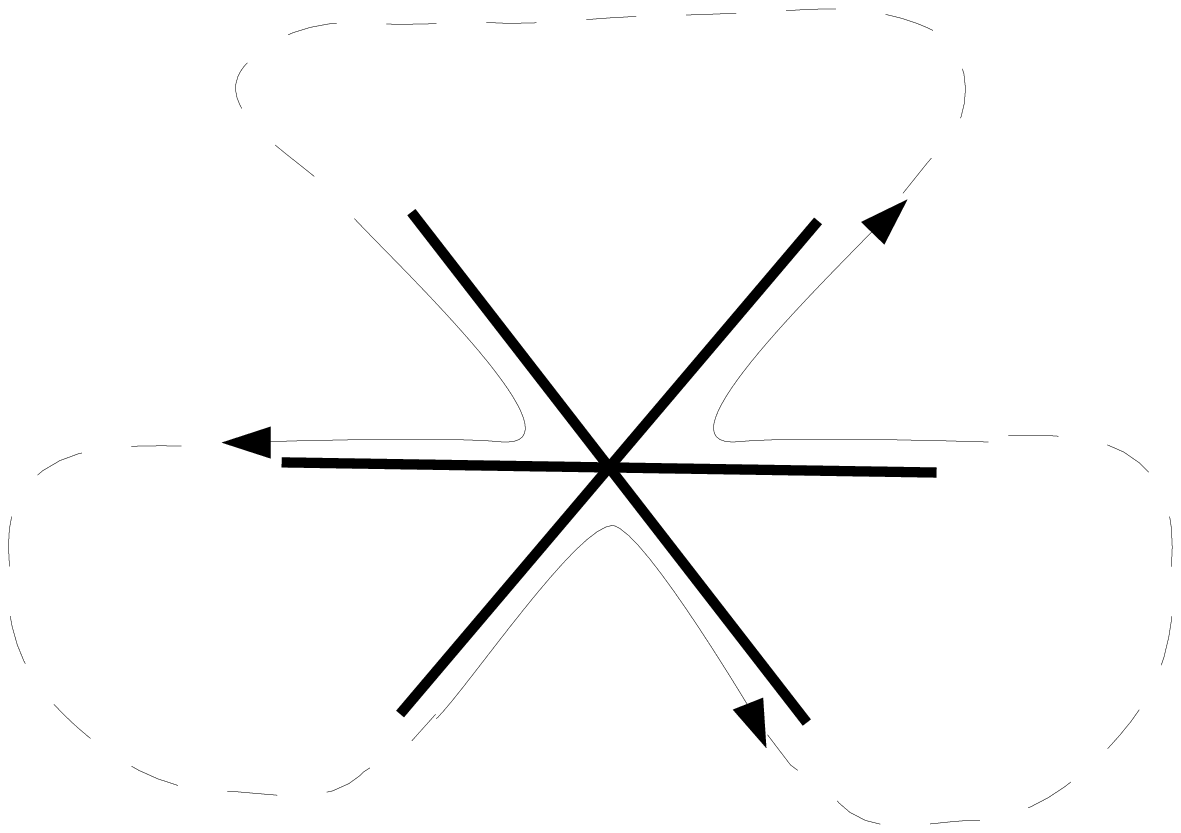}
	\caption{A flat rotating vertex}
	\end{subfigure}
	\quad
	\begin{subfigure}{0.45\textwidth}
	\centering
	\includegraphics[trim = 0cm 13cm 4cm 3cm, clip=true, totalheight=4cm]{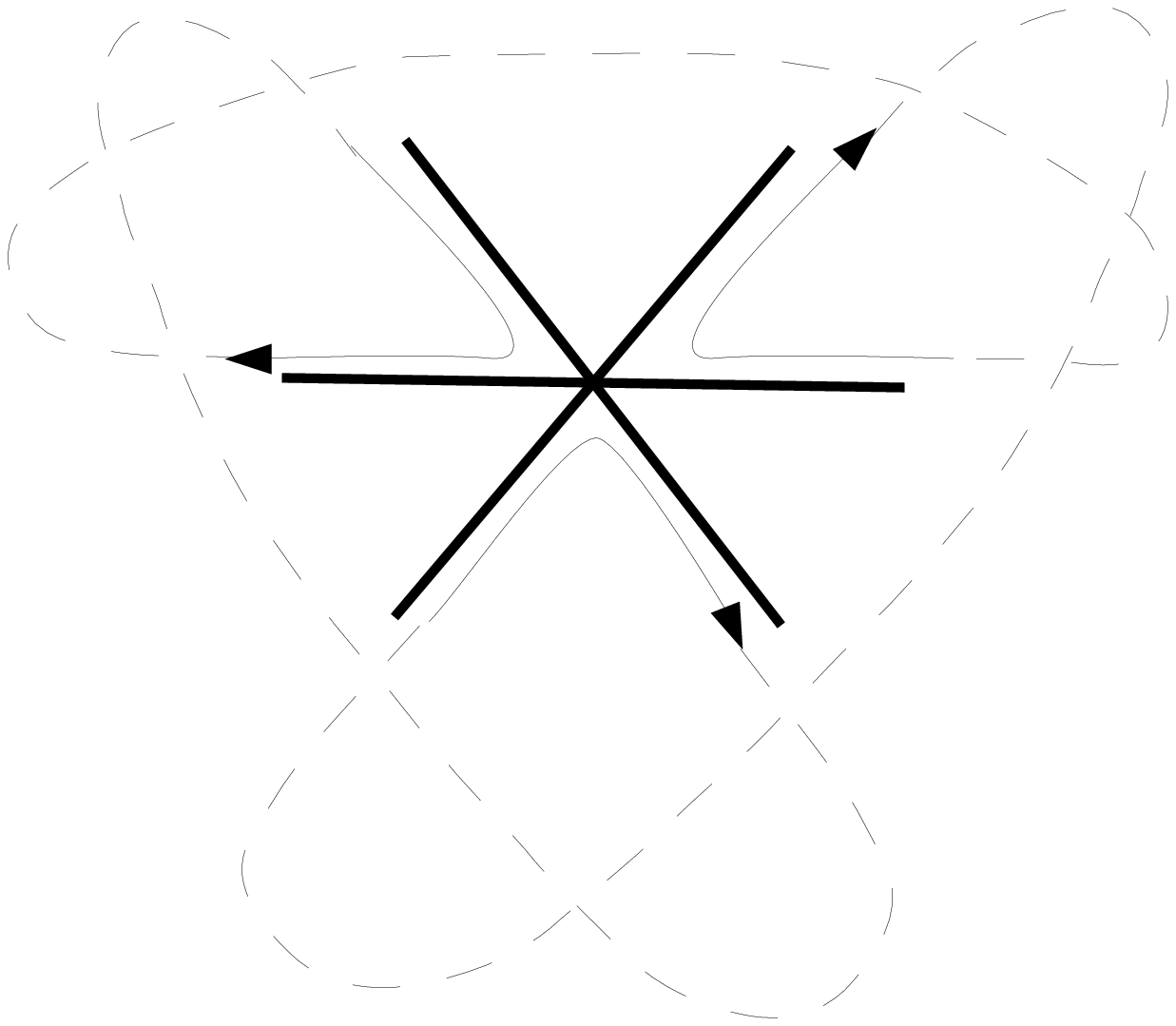}
	\caption{A  crossed rotating vertex}
	\end{subfigure}
	\caption{}
\end{figure}

\begin{definition}
A \emph{chord diagram} is a cubic graph with a distinguished simple cycle.
\end{definition}

\begin{definition}
A $\st$-chord diagram is a graph $D$ with a distinguished simple cycle, such that every vertex in $D$ has degree 3 or 4 and for every edge $e$ in $D$, one of the following holds:
\begin{enumerate}
\item $e$ is in the distinguished cycle.
\item Both of the vertices on $e$ are in the distinguished cycle, and both have degree 3.
\item One of the vertices on $e$ is in the distinguished cycle, the other is not, and both have degree 3.
\item Both of the vertices on $e$ are in the distinguished cycle, one has degree 3, and the other has degree 4.
\end{enumerate}
Additionally, the vertices not in the distinguished cycle are marked as flat or crossed.
\end{definition}

\begin{definition}
An \emph{arc} of a $\st$-chord diagram $\Gamma$ is an edge in the distinguished cycle of $\Gamma$.
\end{definition}

\begin{definition}
A \emph{chord} of a $\st$-chord diagram $\Gamma$ is an edge not in the distinguished cycle of $\Gamma$, connecting two vertices of degree 3 which are in the cycle.
\end{definition}

\begin{definition}
A \emph{triad} of a $\st$-chord diagram $\Gamma$ is a vertex $v$ not in the distinguished cycle of $\Gamma$, together with the three edges incident to $v$. The vertex $v$ is called a \emph{triad point}.
\end{definition}

\begin{remark}
Since triad points are the points not in the distinguished cycle, they are marked as flat or crossed. Triad points which are crossed are marked in diagrams with a black dot; flat triad points are unmarked.
\end{remark}

\begin{definition}
A \emph{double chord} of a $\st$-chord diagram $\Gamma$ is a pair of edges not in the distinguished cycle of $\Gamma$, which are incident to a shared vertex $v$. The vertex $v$ is called the \emph{principal vertex} of the double chord.
\end{definition}

\begin{figure}
\centering
\includegraphics[trim=0cm 18cm 10cm 1cm, clip=true, totalheight=4cm]{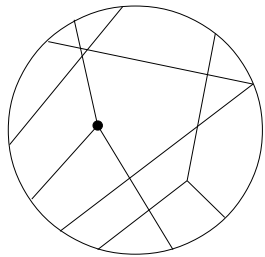}
\caption{a $*$-chord diagram with a crossed triad, a flat triad, a double chord, and a chord. Note that the only vertices on the interior of the circle are the two triad points; the other apparent vertices are simply the intersections of edges in the plane.}
\label{star-chord_diagram}
\end{figure}

\begin{remark}
Both in chord diagrams and in $\st$-chord diagrams, we can identify the distinguished cycle with $S^1$, and view the chords, triads, and double chords as additional structure on the points of $S^1$.
\end{remark}

\begin{definition}
Two chords $pq$ and $rs$ are \emph{linked} if $p$ and $q$ belong to different connected components of $S^1 \setminus\{r,s\}$.
\end{definition}

Given a $\st$-graph $\Gamma$ which meets the source-sink condition and with all vertices of degree 4 or 6, and given a rotating-splitting circuit $C$ of $\Gamma$, we define a $\st$-chord diagram $D_{\Gamma, C}$ as follows:

For any 4-vertex $v$ in $\Gamma$, the two points in $S^1$ which are mapped to $v$ by $C$ are connected by a chord. For any rotating 6-vertex $v$ in $\Gamma$, the three points in $S^1$ which are mapped to $v$ by $C$ are connected by a triad. For any splitting 6-vertex $v$ in $\Gamma$, the three points in $S^1$ which are mapped to $v$ by $C$ are connected by a double chord, whose principal vertex is $a \in C^{-1}(\{v\})$ such that $C(a-\epsilon)$ and $C(a+\epsilon)$ are in opposite half-edges around $v$.

A triad in $D_{\Gamma, C}$ is labeled as flat if the corresponding vertex in $\Gamma$ is flat; crossed if the corresponding vertex in $\Gamma$ is crossed.

\begin{definition}
An \emph{expansion} of a $\st$-chord diagram $D_{\Gamma, C}$ is a chord diagram $D'_{\Gamma, C}$, such that
\begin{enumerate}
\item For every chord $D_{\Gamma, C}$, containing vertices $a$ and $b$, there is a chord in $D'_{\Gamma, C}$ containing vertices $a$ and $b$.
\item For every triad $e$ in $D_{\Gamma, C}$, for some labeling $a,b,c$ of the vertices of $e$, $D'_{\Gamma, C}$ contains a chord connecting $a\pm\epsilon$ to $b$ and a chord connecting $a\mp\epsilon$ to $c$, with $\pm\epsilon$ chosen in such a way that the chords are linked if $e$ is crossed, and not linked if $e$ is flat.
\item For every double chord $e$ in $D_{\Gamma, C}$ with principal vertex $a$, for some labeling $b,c$ of the nonprincipal vertices of $e$, there is chord connecting $a-\epsilon$ to $b$ and a chord connecting $a+\epsilon$ to $c$. These chords are not linked.
\end{enumerate}
\end{definition}

\begin{remark}
Every $\st$-chord diagram has an expansion; but for $\st$-chord diagrams containing triads, the expansion is not unique. For our purposes it will not matter which expansion of the $\st$-chord diagram we take.
\end{remark}

\begin{figure}
\centering
\includegraphics[trim=0cm 18cm 10cm 1cm, clip=true, totalheight=4cm]{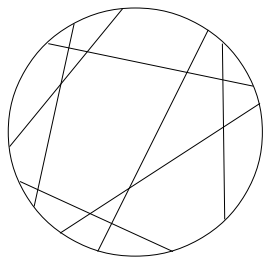}
\caption{An expansion of the $*$-chord diagram in Figure \ref{star-chord_diagram}}
\label{expansion}
\end{figure}

\begin{definition}
A \emph{permissible separation} of a chord diagram $D'_{\Gamma, C}$ arising as an expansion of a $\st$-chord diagram $D_{\Gamma, C}$ is a pair of chord diagrams $D_W$ and $D_B$ such that
\begin{enumerate}
\item Every chord in $D'_{\Gamma, C}$ is in exactly one of $D_W$ and $D_B$.
\item Two chords in $D'_{\Gamma, C}$ which come from the same triad in $D_{\Gamma, C}$ are both in $D_W$, or both in $D_B$.
\item Of any two chords in $D'_{\Gamma, C}$ which come from the same double chord in $D_{\Gamma, C}$, one chord is in $D_W$ and the other is in $D_B$.
\end{enumerate}
\end{definition}

\begin{figure}
\centering
\includegraphics[trim=0cm 18cm 0cm 1cm, clip=true, totalheight=4cm]{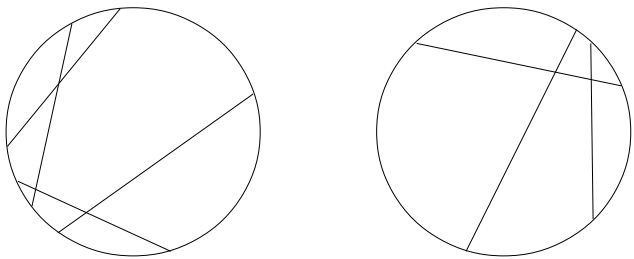}
\caption{A permissible separation of Figure \ref{expansion}}
\label{separation}
\end{figure}

Suppose $\Gamma$ is checkerboard-embedded in a closed surface $S$. Then the rotating-splitting circuit $C$ gives a mapping from $S^1$ to $S$ which is one-to-one except at the preimages of vertices $\Gamma$. This mapping can be smoothed to give an embedding of $S^1$ into $S$, as in Figure \ref{fig:smoothed_rscycle}. Observe that the circle $S^1 \subset S$ divides the surface into a black part and a white part. We can draw the chords of $D'_{\Gamma, C}$ as small edges lying in neighborhoods of vertices of $\Gamma$, see Figure \ref{fig:small_chord}. The coloring of $S$ divides the chords of $D'_{\Gamma, C}$ into two families: those lying in the white part and those lying in the black part. Observe that the two chords in the neighborhood of a rotating vertex are in the same part, and the two chords in the neighborhood of a splitting vertex are in different parts. Thus we have a permissible separation of $D'_{\Gamma, C}$.

\begin{figure}
\centering
\includegraphics[trim = 2cm 7cm 0cm 7cm, clip=true, totalheight=6cm]{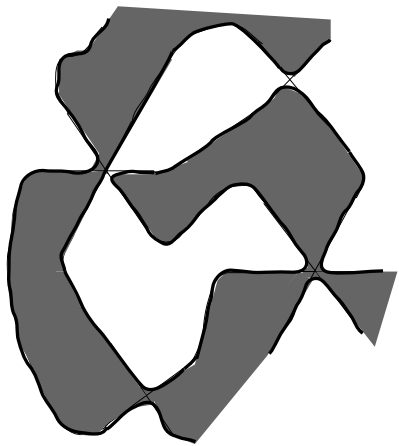}
\caption{The circle $S^1 \subset S$ divides the surface into a black part and a white part.}
\label{fig:smoothed_rscycle}
\end{figure}

\begin{figure}
	\caption{Chords drawn as small edges at:}
	\begin{subfigure}{0.3\textwidth}
	\caption{a 4-vertex}
	\centering
	\includegraphics[trim = 8cm 20cm 0cm 3cm, clip=true, totalheight=3cm]{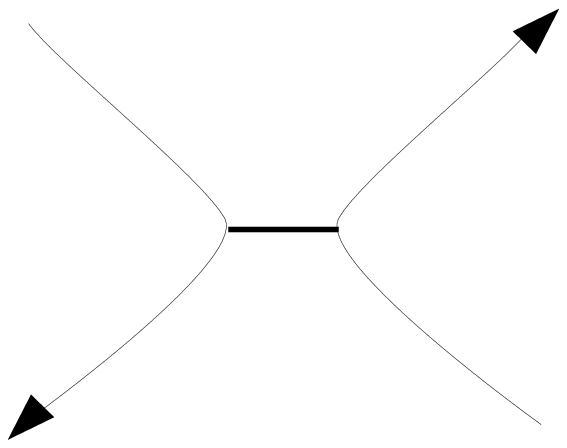}
	\end{subfigure}
	\quad
	\begin{subfigure}{0.3\textwidth}
	\caption{a rotating 6-vertex}
	\centering
	\includegraphics[trim = 5cm 17cm 4cm 5cm, clip=true, totalheight=3cm]{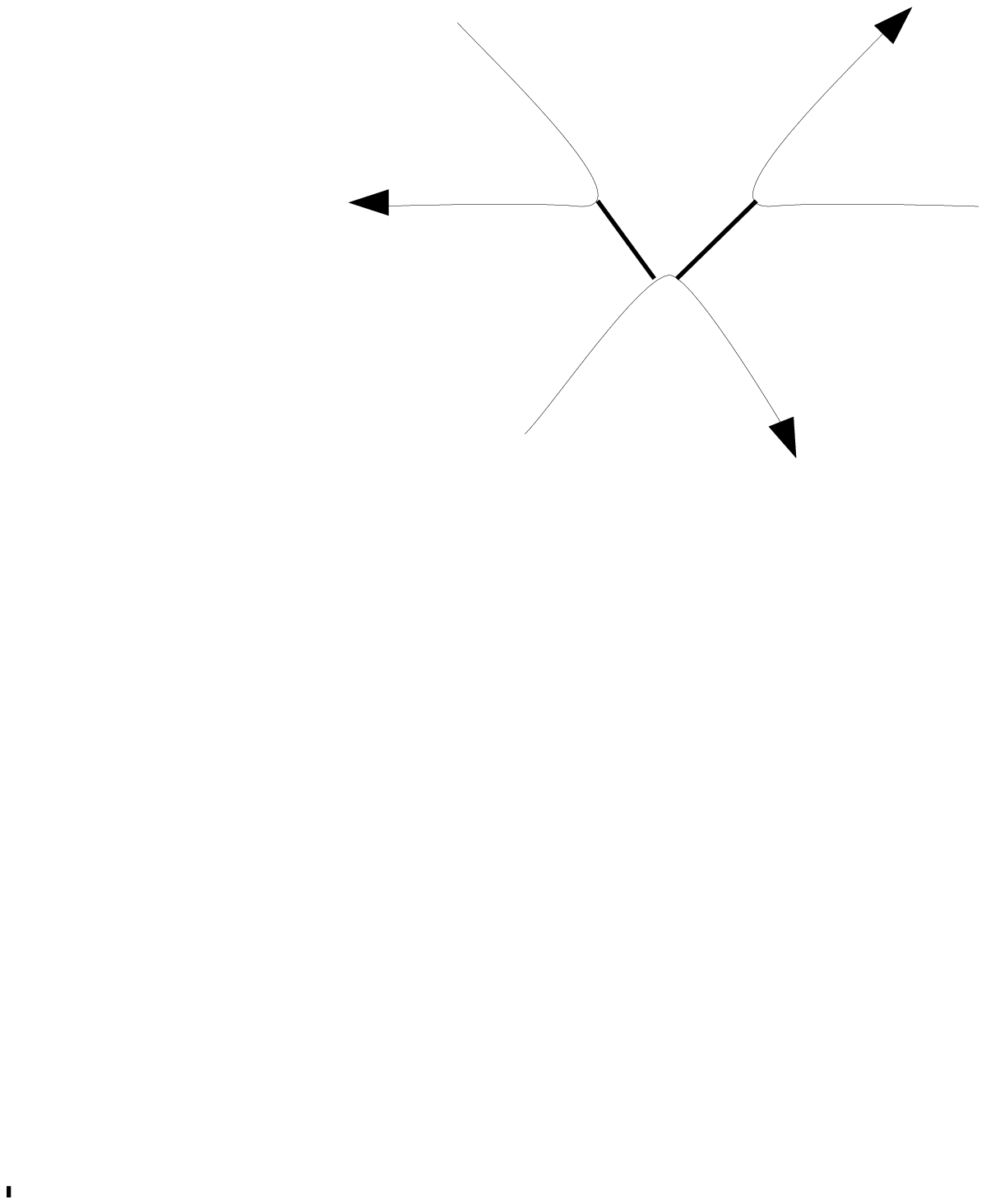}
	\end{subfigure}
		\quad
	\begin{subfigure}{0.3\textwidth}
	\caption{a splitting 6-vertex}
	\centering
	\includegraphics[trim = 6cm 17cm 4cm 4cm, clip=true, totalheight=3cm]{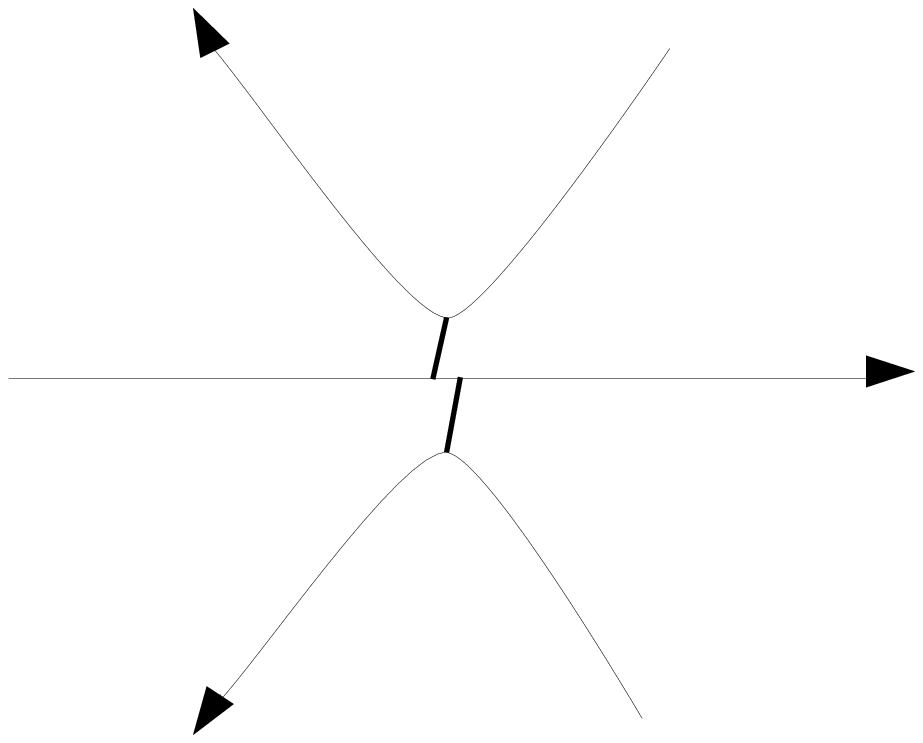}
	\end{subfigure}
	\label{fig:small_chord}
\end{figure}

Vice versa, given a $\st$-graph $\Gamma$ with all vertices of degree 4 or 6 and which satisfies the source-sink condition, a rotating-splitting circuit $C$ of $\Gamma$, and a permissible separation of $D'_{\Gamma, C}$, we can recover the coloring of the angles around each vertex of $\Gamma$, and thus we can recover the surface $S$. Thus given a $\st$-graph $\Gamma$ with all vertices of degree 4 or 6 and which satisfies the source-sink condition, and an expansion $D'_{\Gamma, C}$ of its $\st$-chord diagram, we have a one-to-one correspondence between atoms of $\Gamma$ and permissible separations of $D'_{\Gamma, C}$.

Note that regardless of whether a rotating vertex $v$ is flat or crossed, the two chords to be drawn in the neighborhood of $v$ do not cross in $S$, as shown in Figures \ref{flat_lifecycle} and \ref{crossed_lifecycle}. Thus we have an embedding of $D'_{\Gamma, C}$ into $S$. Furthermore, since the embedding of $\Gamma$ divides $S$ into 2-cells, the embedding of $D'_{\Gamma, C}$ does as well.

\begin{figure}
\includegraphics[trim = 1cm 20cm 0cm 3cm, clip=true, totalheight=3.3cm]{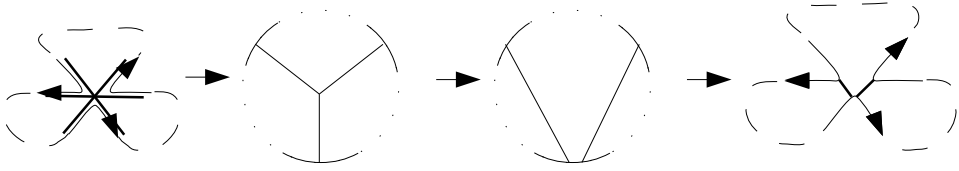}
\caption{The chords drawn in the neighborhood of a flat rotating 6-vertex do not cross.}
\label{flat_lifecycle}
\end{figure}

\begin{figure}
\includegraphics[trim = 1cm 20cm 0cm 3cm, clip=true, totalheight=3.3cm]{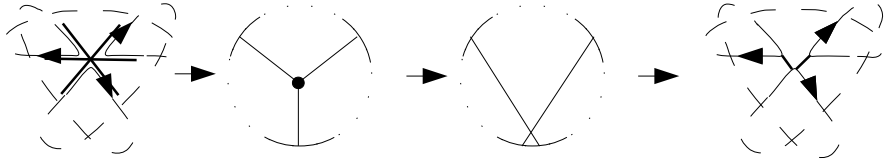}
\caption{The chords drawn in the neighborhood of a crossed rotating 6-vertex do not cross.}
\label{crossed_lifecycle}
\end{figure}

\begin{definition}
Given a chord diagram $D$, \emph{surgery} of $D$ is the following process: For each chord $e$ connecting points $a,b$, delete a neighborhood of $e$ and connect the obtained endpoints $a+\epsilon$ to $b-\epsilon$ and $a-\epsilon$ to $b+\epsilon$. This produces a family of circles; these are called the \emph{result of surgery} of $D$.
\end{definition}

\begin{figure}
\centering
\includegraphics[trim = 1cm 17cm 0cm 3cm, clip=true, totalheight=4cm]{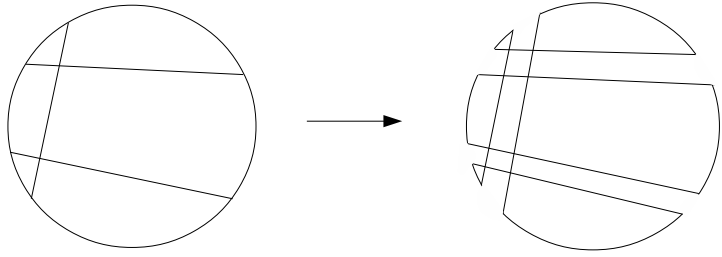}
\caption{Surgery of a chord diagram}
\label{surgery}
\end{figure}

\begin{definition}
To form the \emph{intersection matrix} of a chord diagram $D$ with $n$ chords, first enumerate the chords $1,\ldots,n$. Then the \emph{intersection matrix} $M(D)$ is an $n \times n$ matrix over $\mathbb Z_2$, such that $M_{ii}=0$ for any $i$, and $M_{ij}=M_{ji}=1$ for $i \neq j$ if and only if the chords $i$ and $j$ are linked.
\end{definition}

\begin{theorem}[Circuit-Nullity Theorem \cite{cohn},\cite{soboleva},\cite{traldi}]
The number of components in the manifold obtained from a chord diagram $D$ by surgery of the circle is one plus the corank of $M(D)$.
\end{theorem}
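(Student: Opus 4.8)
The plan is to realize the surgery as the boundary of an explicitly built compact surface and then to read the number of its boundary components off the $\mathbb Z_2$-intersection form on its first homology by Poincar\'e--Lefschetz duality. Concretely, start from the disk $\Delta$ whose boundary is the distinguished circle, and for each chord of $D$ attach a band ($1$-handle) glued to $\partial\Delta$ along two small arcs centered at the two endpoints of that chord, the arcs occurring around $\partial\Delta$ in the cyclic order dictated by $D$ and the band attached without a half-twist. Call the resulting surface $F$. It is connected, deformation retracts onto a wedge of $n$ circles (so $H_1(F;\mathbb Z_2)\cong\mathbb Z_2^n$, with basis the cores of the bands closed up through $\Delta$), and $\chi(F)=1-n$. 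A local inspection at one band shows that traversing $\partial F$ performs exactly the reconnection in the definition of surgery: running along the two sides of an untwisted band glued near points $a$ and $b$ joins $a+\epsilon$ to $b-\epsilon$ and $a-\epsilon$ to $b+\epsilon$. (A half-twist would instead join $a+\epsilon$ to $b+\epsilon$; this is why the untwisted convention is the correct one, and also why the matrix below has zero diagonal.) Hence the components of $\partial F$ are precisely the circles produced by surgery of $D$, and it remains to count them.

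Next I would identify the intersection form. In the basis above, the self-pairing of the core of a band equals the number of half-twists of that band mod $2$, hence $0$; and for $i\ne j$ the cores of bands $i$ and $j$ meet transversally in an odd number of points if and only if the four attaching arcs interleave around $\partial\Delta$, i.e.\ if and only if chords $i$ and $j$ are linked. Thus the Gram matrix of the $\mathbb Z_2$-intersection pairing on $H_1(F;\mathbb Z_2)$ is exactly $M(D)$.

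Then I would count the boundary components homologically. The map $j_*\colon H_1(F;\mathbb Z_2)\to H_1(F,\partial F;\mathbb Z_2)$ becomes, under the Lefschetz isomorphism $H_1(F,\partial F;\mathbb Z_2)\cong H^1(F;\mathbb Z_2)=\operatorname{Hom}(H_1(F;\mathbb Z_2),\mathbb Z_2)$, the adjoint of the intersection form, so $\ker j_*$ is the radical of that form, of dimension $\operatorname{corank}M(D)$. On the other hand, exactness of $H_1(\partial F)\to H_1(F)\xrightarrow{\,j_*\,}H_1(F,\partial F)$ identifies $\ker j_*$ with the image of $H_1(\partial F;\mathbb Z_2)\to H_1(F;\mathbb Z_2)$, and exactness of $H_2(F,\partial F;\mathbb Z_2)\xrightarrow{\,\partial\,}H_1(\partial F;\mathbb Z_2)\to H_1(F;\mathbb Z_2)$ together with $H_2(F,\partial F;\mathbb Z_2)\cong H^0(F;\mathbb Z_2)\cong\mathbb Z_2$ shows that this image has dimension $c-1$, where $c$ is the number of components of $\partial F$ and the single relation is that the sum of all boundary circles bounds in $F$. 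Equating the two computations of $\dim\ker j_*$ gives $c=1+\operatorname{corank}M(D)$, which by the second paragraph is the assertion.

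I expect the homological count to be the conceptual core; the earlier steps are really a careful matching of conventions so that one and the same surface simultaneously realizes the combinatorial surgery rule and the matrix $M(D)$, and the one genuine subtlety is getting the band-twist convention to line up with the reconnection rule $a+\epsilon\mapsto b-\epsilon$ (which forces the zero diagonal). A purely inductive alternative is also available — add the chords one at a time and check that the two endpoints of the new chord lie on the same surgery circle of the smaller diagram exactly when its interlacement column lies in the column span of the smaller matrix, using the identity $w^{\mathsf T}M(D')w=0$ for a zero-diagonal symmetric $M(D')$ over $\mathbb Z_2$ to see that the corank then goes up by $1$ and otherwise down by $1$ — but it encodes the same duality less transparently, so I would present the surface argument.
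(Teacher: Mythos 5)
The paper does not prove this statement at all: it is imported as a known result, with the proof deferred to the cited references (Cohn--Lempel, Soboleva, Traldi). So there is no internal proof to compare against, and the relevant question is only whether your argument stands on its own. It does. Your construction is essentially the standard topological proof of the circuit-nullity theorem (closest in spirit to Soboleva's), and each of its three identifications checks out: (i) the boundary of the disk-with-untwisted-bands surface $F$ performs exactly the reconnection $a+\epsilon\to b-\epsilon$, $a-\epsilon\to b+\epsilon$ of the paper's surgery rule, so the components of $\partial F$ are the surgery circles (the one-chord case, an annulus with two boundary circles versus corank $1$, confirms the convention); (ii) in the band-core basis the $\mathbb Z_2$-intersection form on $H_1(F;\mathbb Z_2)$ is $M(D)$, the zero diagonal because an untwisted core admits a disjoint pushoff and the off-diagonal entries because two cores meet only inside the disk, where they cross an odd number of times precisely when the chords are linked; (iii) the homological count is sound, using $H_2(F;\mathbb Z_2)=0$ to make $\partial\colon H_2(F,\partial F;\mathbb Z_2)\to H_1(\partial F;\mathbb Z_2)$ injective with one-dimensional image (the sum of the boundary circles), so that $\dim\ker j_*=c-1$, while Lefschetz duality identifies $\ker j_*$ with the radical of the form, of dimension $\operatorname{corank}M(D)$. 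What your route buys over the paper's citation is a self-contained argument that moreover sits naturally beside Lemma \ref{sep-to-genus}, which already embeds the chord diagram into a surface and counts cells; the purely combinatorial induction you sketch at the end (or the original Cohn--Lempel argument on permutations) buys independence from surface topology but, as you say, hides the duality that makes the formula $1+\operatorname{corank}$ inevitable.
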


\begin{lemma}\label{sep-to-genus}
Given a $\st$-graph $\Gamma$ which satisfies the source-sink condition and in which all vertices have degree 4 or 6 and a rotating-splitting circuit $C$ of $\Gamma$, and a checkerboard embedding of $\Gamma$ into a surface $S$, the genus of $S$ is given by
\[(\mbox{rank}(M(D_W))+\mbox{rank}(M(D_B)))/2\]
where $D_W$ and $D_B$ are the results of the permissible separation of $D'_{\Gamma, C}$ induced by the embedding.
\end{lemma}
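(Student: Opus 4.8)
The plan is to cut $S$ along the smoothed rotating--splitting circuit, compute the genus contributed by each of the two sides separately, and add. Write $\hat S_W$ and $\hat S_B$ for the closed surfaces obtained from the white part and the black part of $S$ by capping off the smoothed copy of $S^1$ with a disc. First I would check that this decomposition makes sense: the smoothed circuit $S^1\subset S$ is a single embedded circle, connected because $C$ is a single Euler circuit and at every vertex the rotating, flat, crossed, or splitting smoothing merely follows $C$, and embedded by the local pictures of these vertices already drawn above. Since the embedding is checkerboard and cellular, $S^1$ is after smoothing exactly the common frontier of the closed white region and the closed black region; hence $S^1=\partial S_W=\partial S_B$, each of $S_W,S_B$ is a compact connected surface with a single boundary circle (a connected, two-sided, separating simple closed curve cuts a connected surface into exactly two pieces), and $S=S_W\cup_{S^1}S_B$. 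Comparing Euler characteristics, $\chi(S)=\chi(S_W)+\chi(S_B)$ and $\chi(\hat S_W)=\chi(S_W)+1$, $\chi(\hat S_B)=\chi(S_B)+1$, which gives $g(S)=g(\hat S_W)+g(\hat S_B)$. It therefore suffices to prove $g(\hat S_W)=\mathrm{rank}(M(D_W))/2$, and symmetrically for the black side.

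Next I would analyse $\hat S_W$. Let $F_W\subset\hat S_W$ be the union of the capping disc, a thin annular neighbourhood of $S^1$, and one band for each white chord. Because the white chords lie in $S_W$ in pairwise disjoint neighbourhoods of vertices and, even at flat or crossed rotating vertices, do not cross themselves, $F_W$ is a disc with $n_W$ bands attached, where $n_W$ is the number of white chords; in particular $\chi(F_W)=1-n_W$. The complement $\hat S_W\setminus\mathrm{int}(F_W)$ is a disjoint union of open discs: its components are the faces of the cellular embedding $D_W=S^1\cup\{\text{white chords}\}\hookrightarrow\hat S_W$, and these coincide with the white faces of the cellular embedding $D'_{\Gamma,C}\hookrightarrow S$ (a white face of $D'$ lies in the white part of $S\setminus S^1$, so it meets no black chord and is already a face of $D_W$; conversely every face of $D_W$ in $\hat S_W$ is either the region cut off by the capping disc or such a white face). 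Hence $\hat S_W$ is recovered from $F_W$ by capping each boundary circle of $F_W$ with a disc, so $g(\hat S_W)=g(F_W)$, and if $b_W$ is the number of boundary circles of $F_W$ then $g(F_W)=(1+n_W-b_W)/2$.

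To finish I would invoke the Circuit-Nullity Theorem. Tracing $\partial F_W$ along the boundary of the central disc (a copy of $S^1$ carrying the white chord endpoints) and around the sides of the bands, one is carried from $a\pm\epsilon$ to $b\mp\epsilon$ at each white chord $ab$ — the two sides of an untwisted band produce exactly these two reconnections, and the bands are untwisted because $\hat S_W$ is orientable — so this trace reproduces the surgery of the chord diagram $D_W$. Thus $b_W$ equals the number of circles in the result of surgery of $D_W$, which by the Circuit-Nullity Theorem is $1+\mathrm{corank}(M(D_W))=1+n_W-\mathrm{rank}(M(D_W))$. Substituting, $g(\hat S_W)=g(F_W)=\mathrm{rank}(M(D_W))/2$; by the same argument $g(\hat S_B)=\mathrm{rank}(M(D_B))/2$, and therefore $g(S)=(\mathrm{rank}(M(D_W))+\mathrm{rank}(M(D_B)))/2$.

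The step I expect to be the main obstacle is the bookkeeping in the middle, carried out uniformly over all vertex types: verifying that the white faces of the expansion $D'_{\Gamma,C}$ really are precisely the faces of $D_W$ inside $\hat S_W$ — so that the complement of $F_W$ consists of discs — for rotating $4$-vertices, for flat and crossed rotating $6$-vertices, and for splitting $6$-vertices alike; and fixing the orientation conventions carefully enough that the boundary trace of $F_W$ agrees on the nose with the surgery operation as defined (in particular that the bands are genuinely untwisted). Once these are in place, the Euler-characteristic computations and the appeal to the Circuit-Nullity Theorem are routine.
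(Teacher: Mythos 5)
Your proof is correct and rests on the same two pillars as the paper's: the identification of the 2-cells on the white (resp.\ black) side of the cellular embedding of $D'_{\Gamma,C}$ with the circles obtained by surgery on $D_W$ (resp.\ $D_B$), and the Circuit-Nullity Theorem. Where you differ is in the bookkeeping. The paper performs a single global Euler-characteristic count on $S$ (vertices $2|D'_{\Gamma,C}|$, edges $3|D'_{\Gamma,C}|$, faces $\mbox{corank}(M(D_W))+\mbox{corank}(M(D_B))+2$), whereas you cut $S$ along the smoothed circuit, cap off each side, and compute the genus of each closed half separately via a disc-with-bands model before adding. Your route is longer but yields the sharper per-side statement $g(\hat S_W)=\mbox{rank}(M(D_W))/2$, attributing each rank term to the genus of one half of the atom, and it makes explicit two facts the paper leaves implicit: that the smoothed circuit is a separating simple closed curve with connected complementary sides, and that the chord bands are untwisted so the boundary trace reproduces the surgery operation as defined. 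The ``main obstacle'' you flag --- that the white faces of $D'_{\Gamma,C}$ are exactly the faces of $D_W$ inside $\hat S_W$ --- is precisely the assertion the paper's first two sentences of proof also rely on (white 2-cells correspond to surgery circles of $D_W$), so your argument needs nothing beyond what the paper itself assumes.
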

\begin{proof}
Consider the embedding $D'_{\Gamma, C}\to S$ described above.
The number of 2-cells on the white side of the embedding is the number of circles resulting in surgery of $D_W$. Likewise, the number of 2-cells on the black side of the embedding is the number of circles resulting in surgery of $D_B$. Applying the Circuit Nullity Theorem, the total number of 2-cells is $\mbox{corank}(M(D_W))+\mbox{corank}(M(D_B))+2$. Introducing the notation $|D'_{\Gamma, C}|$ to represent the number of chords in $D'_{\Gamma, C}$, the number of arcs in $D'_{\Gamma, C}$ is $2|D'_{\Gamma, C}|$, so its total number of edges is $3|D'_{\Gamma, C}|$. The number of vertices in $D'_{\Gamma, C}$ is $2|D'_{\Gamma, C}|$. Thus the Euler characteristic of $S$ is
\[ 2|D'_{\Gamma, C}| - 3|D'_{\Gamma, C}| + \mbox{corank}(M(D_W))+\mbox{corank}(M(D_B))+2\]
\[= -\mbox{rank}(M(D_W))-\mbox{rank}(M(D_B))+2 \]
so the genus of $S$ is $(\mbox{rank}(M(D_W))+\mbox{rank}(M(D_B)))/2$.
\end{proof}

Thus $\Gamma$ has admits an atom of genus $g$ if and only if some permissible separation of $D_{\Gamma, C}$ results in $\mbox{rank}(D_W)+\mbox{rank}(D_B)=2g$. This can be reduced to a problem on matrices, as follows.

\begin{definition}
A \emph{permissible partition} of the indices of $M(D'_{\Gamma, C})$ is a partition of the indices of $M(D'_{\Gamma, C})$ (which are just the chords of $D'_{\Gamma, C}$) into two parts, in such a way that chords arising from the same triad in $D_{\Gamma, C}$ are in the same part, and chords arising from the same double chord in $D_{\Gamma, C}$ are in different parts.
\end{definition}

Clearly, $D_W$ and $D_B$ are a permissible separation of $D'_{\Gamma, C}$ if and only if there exists a permissible partition $I \sqcup J$ of the indices of $M(D'_{\Gamma, C})$ such that $M(D_{\Gamma, C})_I$ is the intersection matrix of $D_W$ and $M(D_{\Gamma, C})_J$ is the intersection matrix of $D_B$.

\section{Main Result}

\begin{theorem}\label{main}
For a $\st$-graph $\Gamma$ with rotating-splitting circuit $C$, $\Gamma$ has a checkerboard embedding into an orientable surface of genus $g$ if and only if there is a permissible partition of the indices of $M(D_{\Gamma, C})$ into parts $I$ and $J$ such that $\mbox{rank}(M_I)+\mbox{rank}(M_J)=2g$.
\end{theorem}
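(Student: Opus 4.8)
The plan is to assemble the correspondences and the rank formula established in Section~2, after which Theorem~\ref{main} is essentially a restatement of Lemma~\ref{sep-to-genus} in the language of matrices. First I would fix an expansion $D'_{\Gamma,C}$ of the $\st$-chord diagram $D_{\Gamma,C}$ and recall the two facts already proved: (i) by the discussion preceding the theorem, the atoms of $\Gamma$ are in one-to-one correspondence with the permissible separations $(D_W,D_B)$ of $D'_{\Gamma,C}$, and --- the complement of a checkerboard-embedded connected graph consisting of $2$-cells --- the existence of a checkerboard embedding of $\Gamma$ into an orientable surface of genus $g$ is equivalent to the existence of an atom of $\Gamma$ of genus $g$; and (ii) by Lemma~\ref{sep-to-genus}, the atom associated to $(D_W,D_B)$ has genus $(\mbox{rank}(M(D_W))+\mbox{rank}(M(D_B)))/2$. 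So it remains only to translate $(D_W,D_B)$ into a permissible partition of the indices of $M(D_{\Gamma,C})$ and to check that the two rank sums agree.

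The key step is the identification $M(D_W)=M(D'_{\Gamma,C})_I$ and $M(D_B)=M(D'_{\Gamma,C})_J$, where $I\sqcup J$ is the permissible partition of the chords of $D'_{\Gamma,C}$ obtained by recording which of $D_W$, $D_B$ each chord lies in. This is the ``clearly'' remark made just before the theorem; I would justify it by the observation that whether two chords are linked depends only on the cyclic order of their four endpoints on $S^1$, so removing the chords of $J$ (which is exactly how $D_W$ is formed from $D'_{\Gamma,C}$) does not change any linking relation among the chords of $I$, and likewise for $D_B$ and $J$. Hence $\mbox{rank}(M(D_W))+\mbox{rank}(M(D_B))=\mbox{rank}(M(D'_{\Gamma,C})_I)+\mbox{rank}(M(D'_{\Gamma,C})_J)$, and combining with Lemma~\ref{sep-to-genus} shows that the atom corresponding to $(D_W,D_B)$ has genus $g$ if and only if the associated permissible partition satisfies $\mbox{rank}(M_I)+\mbox{rank}(M_J)=2g$. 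Since the correspondence in (i) is a bijection, running this equivalence over all atoms (respectively over all permissible partitions) yields both directions of the theorem.

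It remains to dispose of the mild abuse of notation whereby Theorem~\ref{main} speaks of $M(D_{\Gamma,C})$ while the argument passes through a chosen expansion $D'_{\Gamma,C}$: by the remark that the expansion is immaterial for our purposes, the set of values $\mbox{rank}(M_I)+\mbox{rank}(M_J)$ realized by permissible partitions does not depend on which expansion is used --- indeed, by the chain above, each such value equals twice the genus of the corresponding atom, a quantity intrinsic to $\Gamma$ together with the chosen vertex coloring. I expect the only point requiring genuine care is this matrix identification together with its insensitivity to the expansion; everything else is bookkeeping that Section~2 has already carried out, so the proof should be short.
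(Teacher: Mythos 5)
Your proposal is correct and follows essentially the same route as the paper's own (very terse) proof: fix an expansion $D'_{\Gamma,C}$, invoke Lemma~\ref{sep-to-genus} together with the bijection between atoms and permissible separations, and translate separations into permissible partitions via the identification $M(D_W)=M(D'_{\Gamma,C})_I$, $M(D_B)=M(D'_{\Gamma,C})_J$. The only difference is that you explicitly justify the submatrix identification and the independence of the choice of expansion, both of which the paper leaves to the reader.
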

\begin{proof}
Let $D'_{\Gamma, C}$ be any expansion of $D_{\Gamma, C}$.
By Lemma \ref{sep-to-genus}, $\Gamma$ has a checkerboard embedding into a surface of genus $g$ if and only there is a permissible separation $(D_W,D_B)$ of $D'_{\Gamma, C}$ such that $\mbox{rank}(M(D_W)) + \mbox{rank}(M(D_B))=2g$. Such a permissible separation exists if and only if there is a permissible partition of $M(D_{\Gamma, C})$ into parts $I$ and $J$ such that $\mbox{rank}(M_I)+\mbox{rank}(M_J)=2g$.
\end{proof}

Thus, the problem of finding the minimal genus into which a $\st$-chord diagram with each vertex of degree 4 or 6 and which satisfies the source-target condition may be checkerboard-embedded, is equivalent to the problem of finding a permissible partition of the indices of a matrix $M$ into parts $I$ and $J$ which minimizes $\mbox{rank}(M_I)+\mbox{rank}(M_J)$.

\section{The case of the plane}

A $\st$-graph $\Gamma$ with rotating-splitting circuit $C$ is embeddable into the plane if and only if there exists a permissible separation $(D_W,D_B)$ of $D'_{\Gamma, C}$ such that $\mbox{rank}(M(D_W)) = \mbox{rank}(M(D_B))=0$. In other words, $\Gamma$ is planar if and only if there exists a permissible separation of $D'_{\Gamma, C}$ into two chord diagrams, each of which has no linked chords. We can test this condition by the following algorithm, which takes time quadratic in the number of chords of $D'_{\Gamma, C}$, and thus quadratic in the number of vertices of $\Gamma$: Assign one of the chords to either $D_W$ or $D_B$. Then assign to the other part all chords linked to the first chord. If the first chord originates from a triad, assign the other chord coming from this triad to the same chord diagram, and if the first chord originates from a double chord, assign the other chord coming from this double chord to the other chord diagram.  Then, for each of the chords in this way, assign any unassigned linked chords or chords coming from the same triad or double chord, using the same rules described above. Repeat this process until for every assigned chord, the linked chords and any chord coming from the same chord diagram have been assigned. If not all chords have been assigned, take any unassigned chord and arbitrarily assign it to $D_W$ or $D_B$, and repeat until all chords have been assigned. Finally, check whether this is a permissible separation, and whether each of $D_W$ and $D_B$ contains no pairs of linked chords. $\Gamma$ is planar if and only if both of these conditions are true.

The cases when the source-sink condition does not hold or when the embedding is not checkerboard will be addressed in a future paper.


\begin{thebibliography}{99}

\bibitem{cohn}Cohn, M. and Lempel, A. (1972).
Cycle decomposition by disjoint transpositions,
\emph{J. Combin. Theory Ser.} A 13, pp. 83�89.

\bibitem{friesen} Friesen, T. (2012), 
A generalization of Vassiliev's planarity criterion,
{\tt arXiv:1210.1539}

\bibitem{fomenko}
Fomenko, A. T. (1991), 
The theory of multidimensional integrable hamiltonian systems (with arbitrary many degrees of freedom). 
Molecular table of all integrable systems with two degrees of freedom,
\emph{Adv. Sov. Math}, 6,
pp. 1-35.

\bibitem{manturov1}
Manturov, V. O. (2005),
A proof of Vassiliev's conjecture on the planarity of singular links,
\emph{Izv. Ross. Akad. Nauk Ser. Mat.} \textbf{69} (5) 169-178

\bibitem{manturov2}
Manturov, V.O. (2009),
Embeddings of Four-valent Framed Graphs into 2-surfaces,
\emph{Doklady Akademii Nauk}, 2009, Vol. 424, No. 3, pp. 308-310.

\bibitem{soboleva}
Soboleva, E. (2001), Vassiliev Knot Invariants Coming from Lie Algebras
and 4-Invariants, 
\emph{Journal of Knot Theory and Its Ramifications},
\textbf{10} (1),
pp. 161-169.

\bibitem{traldi}
Traldi, L. (2009).
Binary nullity, Euler circuits and interlace polynomials,
preprint, {\tt arXiv:math.CO=0903.4405}

\end{thebibliography}
\end{document}